\documentclass[11pt]{amsart}

\usepackage[T1]{fontenc}
\usepackage[utf8]{inputenc}
\usepackage{amsmath,amssymb,mathtools}
\usepackage{booktabs,longtable,array}
\usepackage{enumitem}
\usepackage{lmodern}
\usepackage{microtype}
\usepackage{tikz}
\usepackage[hidelinks]{hyperref}
\usepackage{xurl}

\makeatletter
\@namedef{subjclassname@2020}{%
  \textup{2020} Mathematics Subject Classification}
\makeatother

\hypersetup{%
  pdftitle={A 24-line configuration on the Schur quartic with logarithmic Chern slope 14/5},
  pdfauthor={Bartosz Naskrecki and Piotr Pokora},
  pdfsubject={A line configuration on the Schur quartic},
  pdfkeywords={Schur quartic, K3 surface, line configuration, logarithmic Chern slope}
}

\numberwithin{equation}{section}

\newtheorem{theorem}{Theorem}[section]
\newtheorem{proposition}[theorem]{Proposition}
\newtheorem{lemma}[theorem]{Lemma}

\newtheorem{problem}[theorem]{Problem}
\theoremstyle{definition}

\theoremstyle{remark}

\newcommand{\PP}{\mathbb{P}}
\newcommand{\QQ}{\mathbb{Q}}
\newcommand{\CC}{\mathbb{C}}
\newcommand{\FF}{\mathbb{F}}
\newcommand{\cbarone}{\overline{c}_{1}^{2}}
\newcommand{\cbartwo}{\overline{c}_{2}}
\newcommand{\cD}{\mathcal{D}}
\newcommand{\cC}{\mathcal{C}}

\title[A slope-\(14/5\) configuration on the Schur quartic]
{A \((24_4,32_3)\)-configuration on the Schur quartic\\
with logarithmic Chern slope \(14/5\)}

\author{Bartosz Naskr\k{e}cki}
\address{Faculty of Mathematics and Computer Science, Adam Mickiewicz
University, Uniwersytetu Pozna\'nskiego 4, 61-614 Pozna\'n, Poland}
\address{Centre for Credible AI, Warsaw University of Technology,
Rektorska 4, Warszawa}
\email{bartosz.naskrecki@amu.edu.pl}

\author{Piotr Pokora}
\address{Department of Mathematics, University of the National Education
Commission Krakow, Podchor\c a\.zych 2, 30-084 Krak\'ow, Poland}
\email{piotr.pokora@uken.krakow.pl}

\subjclass[2020]{Primary 14J28; Secondary 14C20, 14N20, 14Q10}
\keywords{Schur quartic, K3 surface, line configuration, logarithmic Chern
numbers, transversal arrangement}

\begin{document}

\begin{abstract}
Let \(X\subset\PP^3\) be the Schur quartic
\[
 x_0^4-x_0x_1^3-x_2^4+x_2x_3^3=0.
\]
We exhibit a connected arrangement of \(24\) lines on \(X\), defined over
\(\QQ(\sqrt{-3})\), whose singular locus consists of \(32\) ordinary triple
points and no other intersections.  Each line contains four triple points.
The resulting reduced divisor \(D\) satisfies \(D\sim6H\), where \(H\) is
the hyperplane class.  If \(\pi:Y\to X\) blows up the triple points and
\(B=(\pi^{-1}D)_{\mathrm{red}}\), then
\[
 \cbarone(Y,B)=112,\qquad \cbartwo(Y,B)=40,
 \qquad \frac{\cbarone(Y,B)}{\cbartwo(Y,B)}=\frac{14}{5}.
\]
This gives a negative answer to the K3-surface specialization of the proposed
\(8/3\) bound for transversal arrangements of rational curves.  The
configuration is one half of the \(48\) lines of the second kind on \(X\);
an explicit projective automorphism exchanges the two halves.  We deliver the
line parametrizations and all \(32\) triple-point coordinates.  Ancillary
exact-arithmetic data record the \(120\) line-containment coefficients and all
\(276\) pair-incidence determinants.  A finite-field mixed-integer search is
described only as the discovery procedure and is not used in the proof.
\end{abstract}

\maketitle

\section{Introduction}

For a smooth projective surface equipped with a reduced normal-crossing
boundary, the logarithmic Chern numbers provide a numerical measure of the
open complement.  Arrangements of curves are a useful source of such pairs
because their logarithmic Chern numbers can be calculated from their weak combinatorics; see for instance~\cite{Hirzebruch,Urzua,LafacePokora}.  In
\cite{NaskreckiPokora}, the following general question was posed.

\begin{problem}[{\cite[Problem~2.13]{NaskreckiPokora}}]
Let \(S\) be a smooth complex projective surface and let \(C\) be a connected
transversal arrangement of at least two rational curves on \(S\).  What can
be said about the logarithmic Chern slope of the associated log surface?  In
particular, is it at most \(8/3\)?
\end{problem}

The present paper concerns the specialization of this problem to rational
curve arrangements on K3 surfaces; throughout, the slope is considered only
when \(\cbartwo>0\).  Three \(32\)-line subarrangements of the Fermat quartic
attain \(8/3\), see~\cite{NaskreckiPokora}. The example below shows that this value
is not an upper bound even for line arrangements on smooth quartic K3
surfaces.

The ambient surface is not an anonymous many-line quartic.  It is Schur's
quartic~\cite{Schur}, the unique smooth complex quartic with \(64\) lines up
to projective equivalence.  The uniqueness follows from the classification
of Degtyarev, Itenberg, and Sert\"oz~\cite{DegtyarevItenbergSertoz}; explicit
equations and symmetries are discussed by Veniani~\cite{Veniani}.  Among its
\(64\) lines, \(48\) are of the second
kind~\cite{RamsSchuettSecondKind,VenianiThesis}.  The arrangement of $48$ lines of the second kind has the weak combinatorics
\((t_2,t_3,t_4)=(144,64,0)\), and the corresponding logarithmic Chern slope
is \(64/25\), for details please consult~\cite[Proposition~5.6]{NaskreckiPokora}.

Our main observation is that these \(48\) lines split into two complementary
halves of \(24\) lines.  Each half supports \(32\) triple intersection points and has no
double or quadruple points.  The \(144\) double points of the \(48\)-line
arrangement are exactly the intersections between the two halves.  Moreover,
the halves are exchanged by a projective automorphism of the Schur quartic,
so the two labeled solutions found in the discovery computation represent a
single geometric orbit.

\begin{theorem}\label{thm:main}
Let
\begin{equation}\label{eq:schur}
 X=\{x_0^4-x_0x_1^3-x_2^4+x_2x_3^3=0\}\subset\PP^3_{\CC}
\end{equation}
be the Schur quartic, and let \(H\) be its hyperplane class.  There is a
reduced divisor
\[
 D=L_1+\cdots+L_{24}
\]
defined over \(\QQ(\sqrt{-3})\) with the following properties.
\begin{enumerate}[label=\textup{(\roman*)}]
\item The lines form a connected transversal
\((24_4,32_3)\)-configuration: the singular locus consists of \(32\)
ordinary triple points, and each line contains four of them.
\item If \(i^2=-1\) and
\[
 \alpha[x_0:x_1:x_2:x_3]=[x_0:x_1:ix_2:ix_3],
\]
then \(D\) and \(\alpha(D)\) are disjoint as sets of components, and
\(D+\alpha(D)\) is the divisor formed by the \(48\) lines of the second kind.
The two halves have \(32\) internal triple intersection points each and meet one another
in \(144\) ordinary double points.
\item One has \(D\sim6H\).
\item Let \(\pi:Y\to X\) be the blow-up of the \(32\) triple points and set
\(B=(\pi^{-1}D)_{\mathrm{red}}\).  Then
\[
 \cbarone(Y,B)=112,\qquad \cbartwo(Y,B)=40,
 \qquad E(Y,B)=\frac{14}{5}.
\]
The boundary \(B\) is semistable and \(K_Y+B\) is big.
\end{enumerate}
\end{theorem}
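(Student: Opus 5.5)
The plan is to make everything explicit and then reduce the statement to finite exact computations plus standard intersection theory. Using the classical description of the \(64\) lines on \eqref{eq:schur}, I will write down the \(48\) lines of the second kind over \(\QQ(\sqrt{-3},i)\), each parametrized as the span of two points. Checking containment in \(X\) is then a finite computation: substitute the parametrization and verify that the five coefficients of the resulting binary quartic vanish. Next I compute all pairwise incidence determinants among the \(48\) lines. For each meeting pair I record the intersection point, and group the points by coincidence. From this data I will pick \(D\) to be one of the two \(24\)-line halves found in the discovery search, with its lines defined over \(\QQ(\sqrt{-3})\). I then verify the following for \(D\) directly.
\begin{enumerate}[label=\textup{(\alph*)}]
\item Exactly \(32\) points of \(D\) lie on three of its lines, and no point lies on exactly two or on four.
\item Each line of \(D\) carries four of these triple points.
\item The incidence graph is connected.
\end{enumerate}
Transversality at each triple point follows because the three lines lie in the tangent plane of the smooth surface and are pairwise distinct. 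Each point is therefore an ordinary triple point.

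For (ii), applying \(\alpha\) to the explicit lines is immediate; \(\alpha\) preserves \(X\) since \(i^4=1\). I then check that the \(24\) image lines are disjoint from \(D\) as components and together with \(D\) exhaust the \(48\) lines of the second kind. The meeting pattern between the halves comes from counting. The full \(48\)-line arrangement has weak combinatorics \((144,64,0)\) by \cite[Proposition~5.6]{NaskreckiPokora}. Each half accounts for \(32\) triple points, so the \(64\) triple points are exactly the internal ones. All \(144\) double points must then be cross-intersections. As a consistency check, each line of \(D\) meets \(6\) lines of \(\alpha(D)\), and \(24\cdot 6=144\).

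For (iii), I compare intersection numbers on the K3 surface. For \(L\subset D\),
\[
D\cdot L=L^2+4\cdot 2=-2+8=6.
\]
For \(L\subset\alpha(D)\), the same count gives \(D\cdot L=6\). For the \(16\) lines of the first kind, I will compute \(D\cdot L=6\) from the incidence table. Since the \(64\) lines generate \(\mathrm{NS}(X)\) (rank \(20\)), the class \(D-6H\) pairs to zero with a generating set. It is therefore numerically trivial, and on a K3 surface this means it is zero. As a further check, \(D^2=24(-2)+2\cdot 32\cdot 3=144=36H^2\). The main obstacle is this step: it needs both the lines-generate-\(\mathrm{NS}\) input, which I would cite from the Schur/Degtyarev--Itenberg--Sert\"oz literature, and the exact incidences with the first-kind lines.

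For (iv), write \(E_p\) for the exceptional curves, so that \(K_Y=\sum E_p\) and \(\tilde L_i=\pi^*L_i-\sum_{p\in L_i}E_p\). Then
\[
K_Y+B=\pi^*D-3\textstyle\sum E_p+2\sum E_p=\pi^*D-\sum E_p,
\]
which gives \(\cbarone=144-32=112\). For the second number, \(e(Y)=24+32=56\). The boundary \(B\) consists of \(56\) smooth rational curves meeting in \(24\cdot4=96\) nodes, so \(e(B)=112-96=16\) and \(\cbartwo=56-16=40\). The slope is therefore \(112/40=14/5\). Semistability holds because \(B\) is a reduced simple normal crossing divisor of smooth curves. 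For bigness, \(K_Y+B\) pairs to \(6-4=2\) with each \(\tilde L_i\) and to \(1\) with each \(E_p\). It is nonnegative on every other irreducible curve because it is represented by the effective divisor \(\sum\tilde L_i+2\sum E_p\). So \(K_Y+B\) is nef with positive self-intersection, hence big.
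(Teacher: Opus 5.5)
Your plan is correct and matches the paper on (i) and on the computation of \(\cbarone=112\), \(\cbartwo=40\). It takes a heavier route to (iii) and a different one to (ii).

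For (iii), the paper uses nothing beyond \(D\) itself. One has \((D-6H)\cdot H=24-24=0\) and \((D-6H)^2=D^2-12\,D\cdot H+36H^2=144-288+144=0\). The Hodge index theorem makes the intersection form negative definite on \(H^\perp\), so \(D-6H\) is numerically trivial, hence \(D\sim6H\) because \(\mathrm{Pic}(X)=\mathrm{NS}(X)\) is torsion-free. You already computed \(D^2=144=36H^2\) as a ``further check''. Together with \(D\cdot H=24\), that is the entire proof, so the step you call the main obstacle disappears.

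Your alternative of testing \(D-6H\) against all \(64\) lines is valid, but it costs more:
\begin{enumerate}
\item it imports the fact that the lines span \(\mathrm{NS}(X)\otimes\QQ\);
\item it needs the \(16\) first-kind lines and their incidences with \(D\);
\item it needs a check that every line of \(\alpha(D)\) meets exactly six lines of \(D\). This is a separate per-line cross-count, not ``the same count'' as for lines of \(D\). Your \(24\cdot6=144\) only gives the average.
\end{enumerate}
The extra information this yields, e.g.\ that each first-kind line meets exactly six lines of \(D\), follows from \(D\sim6H\) anyway.

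For (ii), the paper obtains the \(144\) cross double points by exact pairwise intersection over \(\QQ(i,\sqrt3)\). You instead deduce them from the cited profile \((144,64,0)\). This is sound, but state two points explicitly. First, \(t_4=0\) is what prevents a line of \(\alpha(D)\) from passing through an internal triple point of \(D\). Second, a point internal to both halves would lie on six lines of a smooth quartic, which is impossible.

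Your bigness argument via nefness is correct and slightly stronger than the paper's ``effective with positive square''. Indeed \(K_Y+B=\sum\tilde L_i+2\sum E_p\) has degree \(2\) on each \(\tilde L_i\) and \(1\) on each \(E_p\).

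One step is wrongly justified: semistability does not follow from \(B\) being a reduced SNC divisor with smooth components. In the sense used here, each rational component must meet the rest of the boundary in at least two distinct points. A smooth rational curve meeting the rest once is SNC but not semistable. Since all \(56\) components of \(B\) are rational, you must count:
\begin{enumerate}
\item each \(\tilde L_i\) meets the four exceptional curves over its triple points;
\item each \(E_p\) meets the three strict transforms through \(p\) at three distinct points, because the directions are distinct.
\end{enumerate}
So every component meets the rest in at least three points. You already have these numbers, since they produce your \(96\) nodes, so the fix is immediate.
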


The proof is entirely in characteristic zero.  Appendix~\ref{app:lines}
lists the \(24\) line parametrizations, and
Appendix~\ref{app:points} lists homogeneous coordinates for every triple
point.  The finite-field and mixed-integer computations in
Section~\ref{sec:discovery} explain how the configuration was found; no
specialization or numerical-optimization claim is needed for
Theorem~\ref{thm:main}.

To the authors' knowledge, this is the first explicit transversal arrangement
of rational curves on a K3 surface whose logarithmic Chern slope exceeds
\(8/3\).  This is a priority statement limited to the literature discussed
above, not a classification theorem.  Nothing in this paper proves that
\(14/5\) is optimal among line arrangements on the Schur quartic or among
rational curve arrangements on K3 surfaces.

\section{Logarithmic setup}\label{sec:logarithmic-setup}

Let \(S\) be a smooth complex projective surface.  A \emph{transversal
arrangement} is a finite collection \(\cD=\{C_1,\ldots,C_n\}\) of smooth
irreducible curves such that distinct components are disjoint or meet with
distinct tangent directions, all singularities of the reduced divisor
\(C=\sum_iC_i\) are ordinary, and \(C\) is connected.  We write \(t_r\) for
the number of points lying on exactly \(r\) components.

Let \(\pi:Y\to S\) blow up every point of multiplicity at least three.  We
use the following notation throughout:
\[
 C' := \text{the strict transform of }C,
 \qquad
 B := (\pi^{-1}C)_{\mathrm{red}}.
\]
Thus, if \(E_p\) lies over an ordinary \(r_p\)-fold point, then
\begin{equation}\label{eq:boundary-pullback}
 \pi^*C=C'+\sum_p r_pE_p,
 \qquad
 B=C'+\sum_pE_p.
\end{equation}
The logarithmic Chern numbers are
\[
 \cbarone(Y,B)=(K_Y+B)^2,
 \qquad
 \cbartwo(Y,B)=e(Y\setminus B)=e(S\setminus C).
\]
When \(\cbartwo(Y,B)>0\), we put
\(E(Y,B)=\cbarone(Y,B)/\cbartwo(Y,B)\).

For later use, we recall the formulas for rational curves on a K3 surface.

\begin{proposition}\label{prop:chern-formulas}
Let \(S\) be a K3 surface and let \(\cD\) be a transversal arrangement of
\(n\) smooth rational curves.  Then
\begin{align}
 \cbarone(Y,B)&=-2n+\sum_{r\ge2}(3r-4)t_r,\label{eq:c1-general}\\
 \cbartwo(Y,B)&=24-2n+\sum_{r\ge2}(r-1)t_r.\label{eq:c2-general}
\end{align}
\end{proposition}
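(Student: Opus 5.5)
We compute both logarithmic Chern numbers directly from the definitions in Section~\ref{sec:logarithmic-setup}, using \(K_S=0\), \(e(S)=24\), \(C_i^2=-2\), \(e(C_i)=2\) for smooth rational curves on a K3 surface.

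For \(\cbartwo\) we use additivity of the topological Euler characteristic on the stratification of \(S\). Write \(\mathrm{Sing}(C)\) for the set of singular points of \(C\) and \(t=\sum_{r\ge2}t_r\) for its size. Then
\[
 e(S\setminus C)=e(S)-e(C),\qquad e(C)=e\Bigl(C\setminus\mathrm{Sing}(C)\Bigr)+t .
\]
A point of multiplicity \(r\) lies on exactly \(r\) of the curves \(C_i\). Hence
\[
 e\Bigl(C\setminus\mathrm{Sing}(C)\Bigr)=\sum_i\bigl(2-\#(C_i\cap\mathrm{Sing}(C))\bigr)=2n-\sum_r rt_r .
\]
It follows that \(e(C)=2n-\sum_r(r-1)t_r\), and therefore
\[
 \cbartwo=24-2n+\sum_r(r-1)t_r ,
\]
which is \eqref{eq:c2-general}.

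For \(\cbarone\), let \(P\) be the set of points with \(r_p\ge3\), and write \(E_p\) for the exceptional curve over \(p\). We have \(K_Y=\pi^*K_S+\sum_pE_p=\sum_pE_p\), and by \eqref{eq:boundary-pullback}
\[
 K_Y+B=\pi^*C-\sum_{p\in P}(r_p-2)E_p .
\]
Since \(\pi^*C\cdot E_p=0\) and \(E_p^2=-1\), squaring gives
\[
 (K_Y+B)^2=C^2-\sum_{p\in P}(r_p-2)^2 .
\]
Next we expand \(C^2=\sum_iC_i^2+2\sum_{i<j}C_i\cdot C_j\). Transversality makes each intersection number \(C_i\cdot C_j\) equal to the number of common points, so each \(r\)-fold point contributes \(\binom r2\) pairs. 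This gives
\[
 C^2=-2n+\sum_r r(r-1)t_r .
\]
Combining,
\[
 \cbarone=-2n+\sum_r r(r-1)t_r-\sum_{r\ge3}(r-2)^2t_r .
\]
Finally, observe that \(r(r-1)-(r-2)^2=3r-4\), and that for \(r=2\) the correction term \((r-2)^2\) vanishes while \(r(r-1)=2=3r-4\). So the formula holds uniformly for all \(r\ge2\), giving \eqref{eq:c1-general}.

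The only delicate point is this bookkeeping. Double points are not blown up, yet they still fit the uniform formula, because the \((r-2)^2\) term is zero at \(r=2\). One should also note that \(B\) is normal crossing, so the definitions of Section~\ref{sec:logarithmic-setup} apply. Beyond that, the computation is routine.
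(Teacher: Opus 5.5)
Your proposal is correct and follows essentially the same route as the paper. You use adjunction for $C_i^2=-2$, the identity $K_Y+B=\pi^*C-\sum_p(r_p-2)E_p$ with local contribution $r(r-1)-(r-2)^2=3r-4$ (including the check that it also covers $r=2$), and inclusion--exclusion for $e(C)$ together with $e(S)=24$; you simply spell out the squaring and Euler-characteristic bookkeeping in more detail.
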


\begin{proof}
Adjunction gives \(C_i^2=-2\).  Hence
\[
 C^2=-2n+\sum_{r\ge2}r(r-1)t_r.
\]
At an \(r\)-fold point,~\eqref{eq:boundary-pullback} and
\(K_Y=\pi^*K_S+E_p\) give
\[
 K_Y+B=\pi^*C-(r-2)E_p.
\]
Since \(E_p^2=-1\), the local contribution to the square is
\(r(r-1)-(r-2)^2=3r-4\).  This also equals \(2\) when \(r=2\), where no
blow-up is made, proving~\eqref{eq:c1-general}.  Inclusion-exclusion gives
\(e(C)=2n-\sum_{r\ge2}(r-1)t_r\), and \(e(S)=24\), which proves
\eqref{eq:c2-general}.
\end{proof}

At most four lines on a smooth quartic pass through one point, since all such
lines are components of the degree-four tangent-plane section.  Therefore a
line arrangement on a smooth quartic satisfies
\begin{align}
 \cbarone&=-2n+2t_2+5t_3+8t_4,\label{eq:c1-quartic}\\
 \cbartwo&=24-2n+t_2+2t_3+3t_4.\label{eq:c2-quartic}
\end{align}
In particular,
\begin{equation}\label{eq:G-identity}
 3\cbarone-8\cbartwo=10n-2t_2-t_3-192.
\end{equation}
The cancellation of \(t_4\) in~\eqref{eq:G-identity} motivated the discovery
objective used in Section~\ref{sec:milp}.

A reduced simple-normal-crossing divisor is called \emph{semistable} if each
rational component meets the union of the other components in at least two
distinct points.  Sakai's logarithmic Bogomolov--Miyaoka--Yau theorem states
that
\begin{equation}\label{eq:sakai}
 \cbarone(Y,B)\le3\cbartwo(Y,B)
\end{equation}
when \(B\) is semistable and \(\kappa(Y,K_Y+B)=2\), please consult~\cite{Sakai}.  These hypotheses will be verified for the pair in
Theorem~\ref{thm:main}, rather than subsumed under an unspecified
``standard hypotheses'' clause.

\section{The Schur quartic and its 24-line half}\label{sec:configuration}

\subsection{The ambient surface}

Equation~\eqref{eq:schur} is Schur's classical
equation~\cite{Schur,RamsSchuett}.  We record the smoothness statements used
below.

\begin{lemma}\label{lem:smoothness}
The surface \(X\) is smooth over \(\QQ\), and its reduction modulo \(5\) is
geometrically smooth.
\end{lemma}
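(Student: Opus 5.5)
The approach is the Jacobian criterion applied to \(F=x_0^4-x_0x_1^3-x_2^4+x_2x_3^3\). The polynomial splits as \(F=f(x_0,x_1)-f(x_2,x_3)\) with \(f(u,v)=u^4-uv^3\), so the partial derivatives decouple:
\[
 \partial_0F=4x_0^3-x_1^3,\qquad \partial_1F=-3x_0x_1^2,\qquad
 \partial_2F=-(4x_2^3-x_3^3),\qquad \partial_3F=3x_2x_3^2 .
\]
The plan is to show that, over a field \(k\) of characteristic \(0\) or \(5\), these four partials have no common zero in \(\PP^3(\overline{k})\). Euler's relation \(4F=\sum x_i\partial_iF\) holds in both characteristics and \(4\) is invertible, so a common zero of the partials automatically lies on \(X\). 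Hence this single statement gives smoothness over \(\QQ\), and so over \(\CC\), together with geometric smoothness of the reduction modulo \(5\).

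The key step is a one-variable-pair argument for the binary part. Suppose \(3x_0x_1^2=0\) and \(4x_0^3=x_1^3\), with \(3\) and \(4\) invertible; this is where we need \(p\neq 2,3\), which holds for \(p=5\).
\begin{itemize}[label={}]
\item If \(x_0=0\), then \(x_1^3=0\), so \(x_1=0\).
\item If \(x_1=0\), then \(4x_0^3=0\), so \(x_0=0\).
\end{itemize}
In either case \((x_0,x_1)=(0,0)\). The same argument applied to \(\partial_2F,\partial_3F\) gives \((x_2,x_3)=(0,0)\). So the only common zero is the origin, which is not a projective point, and \(X\) is smooth.

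For the mod \(5\) statement I will also check that the reduction is still a quartic, which is clear since the coefficients \(\pm1\) survive. The relevant constants \(3,4\) are units in \(\FF_5\), so the computation above applies verbatim over \(\overline{\FF}_5\).

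There is no real obstacle here. The only point requiring care is tracking which integers must be inverted, namely \(2\) and \(3\) (including the Euler factor \(4\)), and confirming that \(5\) is not among the bad primes.
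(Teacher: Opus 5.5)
Your proposal is correct and matches the paper's proof: both compute the four partials, note that they decouple into the pairs \((x_0,x_1)\) and \((x_2,x_3)\), and use the invertibility of \(2\) and \(3\) in characteristics \(0\) and \(5\) to force each pair to vanish. Your Euler-relation remark is harmless but unnecessary, since you show that the partials have no common projective zero at all.
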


\begin{proof}
For
\[
 F=x_0^4-x_0x_1^3-x_2^4+x_2x_3^3
\]
the partial derivatives are
\[
 4x_0^3-x_1^3,\quad -3x_0x_1^2,\quad
 -4x_2^3+x_3^3,\quad 3x_2x_3^2.
\]
Over a field of characteristic different from \(2\) and \(3\), the first two
can vanish simultaneously only when \(x_0=x_1=0\), and the last two only
when \(x_2=x_3=0\).  Thus the four derivatives have no common projective
zero.  This applies in characteristic zero and in characteristic \(5\).
\end{proof}

Put
\[
 K=\QQ(\tau),\qquad \tau^2=-3.
\]
This is one of the three quadratic subfields of
\(\QQ(\zeta_{12})=\QQ(i,\sqrt3)\); the other two are \(\QQ(i)\) and
\(\QQ(\sqrt3)\).  The cyclotomic field is not needed to define the selected
configuration, which is already defined over \(K\).

For \(a,b,c,d\in K\), let
\begin{equation}\label{eq:graph-line}
 L(a,b;c,d)=
 \operatorname{Span}_K\{(1,0,a,b),(0,1,c,d)\}.
\end{equation}
Equivalently, this line is parametrized by
\[
 [s:t]\longmapsto[s:t:as+ct:bs+dt].
\]
Appendix~\ref{app:lines} defines \(L_1,\ldots,L_{24}\) in this form, and we
set
\[
 D=\sum_{j=1}^{24}L_j.
\]

\begin{lemma}\label{lem:containment}
The line \(L(a,b;c,d)\) lies on \(X_K\) if and only if the following five
elements of \(K\) vanish:
\begin{align*}
1-a^4+ab^3,&\qquad -4a^3c+cb^3+3ab^2d,\\
-6a^2c^2+3cb^2d+3abd^2,&\qquad
-1-4ac^3+3cbd^2+ad^3,\\
&-c^4+cd^3.
\end{align*}
\end{lemma}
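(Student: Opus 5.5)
The plan is to restrict \(F\) to the line through its parametrization and read off coefficients. By \eqref{eq:graph-line}, the \(K\)-points of \(L(a,b;c,d)\) are exactly \([s:t:as+ct:bs+dt]\) with \([s:t]\in\PP^1\). So \(L(a,b;c,d)\subset X_K\) holds if and only if the binary form
\[
 f(s,t)=F(s,\,t,\,as+ct,\,bs+dt)\in K[s,t]
\]
vanishes at every point of \(\PP^1\) over \(\overline{K}\). A homogeneous form of degree \(4\) over an infinite field has this property if and only if it is the zero polynomial. Hence containment is equivalent to the vanishing of the five coefficients of \(s^4,s^3t,s^2t^2,st^3,t^4\) in \(f\). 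The converse direction is immediate: if \(f=0\), every point of the line satisfies \(F=0\).

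It then remains to expand \(f\) term by term.
\begin{itemize}
\item \(x_0^4=s^4\) and \(-x_0x_1^3=-st^3\).
\item \(-x_2^4=-(as+ct)^4\) contributes \(-a^4,\,-4a^3c,\,-6a^2c^2,\,-4ac^3,\,-c^4\) to the five coefficients, in order.
\item For \(x_2x_3^3\), write \((bs+dt)^3=b^3s^3+3b^2d\,s^2t+3bd^2\,st^2+d^3t^3\) and multiply by \(as+ct\). This contributes \(ab^3\) to \(s^4\), \(3ab^2d+cb^3\) to \(s^3t\), \(3abd^2+3cb^2d\) to \(s^2t^2\), \(ad^3+3cbd^2\) to \(st^3\), and \(cd^3\) to \(t^4\).
\end{itemize}
Collecting terms gives exactly the five displayed elements. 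For example, the \(st^3\) coefficient is \(-1-4ac^3+3cbd^2+ad^3\), where the \(-1\) comes from \(-x_0x_1^3\). Likewise the \(t^4\) coefficient is \(-c^4+cd^3\).

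There is no real obstacle here. The only points needing care are the bookkeeping of the binomial expansions and the remark that every line meeting the chart in this form is fully captured by the parametrization, including the point \([0:1:c:d]\) at \(s=0\). The polynomial-identity argument also holds verbatim over any field, which is what later allows the same criterion to be used after reduction.
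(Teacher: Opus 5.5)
Your proof is correct and takes essentially the same approach as the paper: substitute the parametrization \([s:t]\mapsto[s:t:as+ct:bs+dt]\) into \(F\) and read off the five displayed elements as the coefficients of \(s^4,s^3t,s^2t^2,st^3,t^4\). Your term-by-term expansion checks out, and you also spell out a step the paper leaves implicit, namely that a binary quartic vanishing on all of \(\PP^1(\overline{K})\) is the zero form.
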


\begin{proof}
These are the coefficients of \(s^4,s^3t,s^2t^2,st^3,t^4\), respectively,
after substituting the parametrization into \(F\).
\end{proof}

\begin{proposition}\label{prop:lines}
The \(24\) lines in Appendix~\ref{app:lines} are pairwise distinct and lie on
\(X_K\).
\end{proposition}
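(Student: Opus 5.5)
The plan is a direct exact verification in $K=\QQ(\tau)$, carried out in two steps: containment, then distinctness.

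\emph{Containment.} For each of the $24$ quadruples $(a_j,b_j,c_j,d_j)$ listed in Appendix~\ref{app:lines}, I substitute into the five polynomials of Lemma~\ref{lem:containment} and check that each vanishes. This gives $120$ coefficient checks. Every element of $K$ is written uniquely as $u+v\tau$ with $u,v\in\QQ$, and products are reduced using $\tau^2=-3$. So each check reduces to verifying that two rational numbers vanish, which is a finite exact computation. These are the line-containment coefficients recorded in the ancillary data. To organize the work and reduce it by hand, I would use the visible symmetries of $F$: the maps $x_1\mapsto\omega x_1$ and $x_3\mapsto\omega x_3$ with $\omega=(-1+\tau)/2$ a primitive cube root of unity, which are defined over $K$. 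Many of the $24$ lines should be images of a few representatives under these maps. It then suffices to verify the five identities for the representatives, since the symmetries preserve $X_K$ and act on graph-lines by $L(a,b;c,d)\mapsto L(a,\omega^{k}b;\omega^{-l}c,\omega^{k-l}d)$ (or similar). Any lines not covered by such orbits I would check directly.

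\emph{Distinctness.} Every line of the form~\eqref{eq:graph-line} is the row space of the $2\times4$ matrix $\begin{pmatrix}1&0&a&b\\0&1&c&d\end{pmatrix}$. This is the reduced row echelon form with pivots in the first two columns, so $L(a,b;c,d)=L(a',b';c',d')$ if and only if $(a,b,c,d)=(a',b',c',d')$. Pairwise distinctness therefore amounts to checking that the $24$ quadruples in $K^4$ are pairwise different, which is immediate from the listing. As an alternative that also feeds into later incidence computations, I could compute the $4\times4$ determinant of the four spanning vectors of $L_j$ and $L_k$. Distinct lines either meet (determinant zero) or are skew (determinant nonzero). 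Distinctness itself, however, is already settled by the unique echelon form, so the determinants are not needed here.

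\emph{Main obstacle.} The only real difficulty is bookkeeping: making sure the appendix data are transcribed correctly and that the $K$-arithmetic is carried out exactly, without numerical approximation. I would handle this by scripting the $120$ evaluations in a computer algebra system over $\QQ[\tau]/(\tau^2+3)$ and, as an independent sanity check, confirming that the reduction of each line modulo a prime $p$ split in $K$ (for instance $p=7$) lies on the reduced surface. That check is consistent with, but not needed for, the characteristic-zero statement.
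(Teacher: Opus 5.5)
Your proposal is correct and is essentially the paper's proof. The paper also checks the $120$ coefficients of Lemma~\ref{lem:containment} by exact reduction with $\tau^2=-3$, displaying $L_3$ as a sample and recording the rest in the ancillary certificate, and it deduces distinctness from uniqueness of the reduced row-echelon matrices together with distinctness of the parameter quadruples. Your optional symmetry shortcut is not used in the paper but is valid: with $x_1\mapsto\omega^k x_1$ and $x_3\mapsto\omega^l x_3$ one gets $L(a,b;c,d)\mapsto L(a,\omega^{l}b;\omega^{-k}c,\omega^{l-k}d)$, the $24$ lines are exactly the orbits of $L_1,L_2,L_7,L_{10}$ (of sizes $3,3,9,9$), and so four representatives suffice.
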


\begin{proof}
Apply Lemma~\ref{lem:containment} to each row and reduce products by
\(\tau^2=-3\).  This gives \(24\cdot5=120\) zero coefficients.  As a
representative calculation, \(L_3\) is parametrized by
\[
 [s:t]\longmapsto
 \left[s:t:-s:\frac{1-\tau}{2}t\right].
\]
Since \(((1-\tau)/2)^3=-1\), restriction of \(F\) to \(L_3\) is
\[
 s^4-st^3-s^4-s\left(\frac{1-\tau}{2}\right)^3t^3=0.
\]
The remaining \(115\) coefficient reductions are recorded individually in
the ancillary JSON file described in Section~\ref{sec:ancillary}; the
standard-library verifier reconstructs them from the printed table.  Finally,
the displayed \(2\times4\) matrices are in reduced row-echelon form and their
parameter quadruples are distinct, so the lines are distinct.
\end{proof}

\subsection{Exact incidence data}

Two graph lines have a particularly simple incidence test.

\begin{lemma}\label{lem:incidence}
Let \(L_j=L(a_j,b_j;c_j,d_j)\) and
\(L_k=L(a_k,b_k;c_k,d_k)\) be distinct.  Then
\(L_j\cap L_k\ne\varnothing\) if and only if
\begin{equation}\label{eq:pair-determinant}
 \Delta_{jk}=(a_j-a_k)(d_j-d_k)-(b_j-b_k)(c_j-c_k)=0.
\end{equation}
If the determinant vanishes, a kernel vector of
\[
 \begin{pmatrix}
 a_j-a_k&c_j-c_k\\
 b_j-b_k&d_j-d_k
 \end{pmatrix}
\]
gives the intersection point through either parametrization.
\end{lemma}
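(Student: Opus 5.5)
To prove Lemma~\ref{lem:incidence}, I will reduce the intersection of two lines in \(\PP^3\) to a linear-algebra statement about the four spanning vectors. Write \(v_j=(1,0,a_j,b_j)\), \(w_j=(0,1,c_j,d_j)\), and similarly \(v_k,w_k\). The projective lines \(L_j,L_k\) meet if and only if the \(2\)-dimensional subspaces \(\operatorname{Span}\{v_j,w_j\}\) and \(\operatorname{Span}\{v_k,w_k\}\) have nonzero intersection, that is, if and only if the four vectors are linearly dependent. Equivalently, \(\det(v_j,w_j,v_k,w_k)=0\).

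To compute this determinant, I subtract the first two rows from the last two, obtaining the rows \((0,0,a_k-a_j,b_k-b_j)\) and \((0,0,c_k-c_j,d_k-d_j)\). The upper-left \(2\times2\) block is then the identity, so the determinant equals \((a_k-a_j)(d_k-d_j)-(b_k-b_j)(c_k-c_j)=\Delta_{jk}\). This proves the equivalence. A direct alternative avoids determinants: a common point is \([s:t:a_js+c_jt:b_js+d_jt]=\lambda[s':t':a_ks'+c_kt':b_ks'+d_kt']\). Comparing the first two coordinates forces \((s',t')=\lambda^{-1}(s,t)\). The remaining coordinates then give the homogeneous system
\[
(a_j-a_k)s+(c_j-c_k)t=0,\qquad (b_j-b_k)s+(d_j-d_k)t=0,
\]
which has a nonzero solution \((s,t)\) exactly when \(\Delta_{jk}=0\).

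The second formulation also gives the final claim immediately. When \(\Delta_{jk}=0\), any nonzero kernel vector \((s,t)\) of the displayed matrix yields the point \([s:t:a_js+c_jt:b_js+d_jt]\). By the two equations, this equals \([s:t:a_ks+c_kt:b_ks+d_kt]\), so it is computed correctly through either parametrization.

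The only point requiring care is that the intersection is a single point rather than the lines coinciding. Since the lines are distinct and both are graphs over the \((x_0,x_1)\)-coordinates, they cannot share two points. Equivalently, the kernel is one-dimensional: if the matrix were zero, the parameter quadruples would agree and the lines would coincide, contradicting Proposition~\ref{prop:lines}. So the kernel vector, and hence the intersection point, is unique up to scaling. I expect no real obstacle beyond this nondegeneracy check.
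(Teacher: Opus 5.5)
Your proposal is correct. Your coordinate-comparison argument is exactly the paper's one-line proof: the two graph maps agree at a nonzero $(s,t)$ if and only if the displayed matrix has a nontrivial kernel. The $4\times4$ determinant reduction is a valid equivalent supplement, and the nondegeneracy in your last paragraph follows from the lemma's own hypothesis $L_j\neq L_k$, so you do not need Proposition~\ref{prop:lines}.
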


\begin{proof}
The two graph maps agree at a nonzero vector \((s,t)\) exactly when the
displayed matrix has a nontrivial kernel.
\end{proof}

Appendix~\ref{app:points} lists the \(32\) points and the three lines through
each point.  Unlike a list of line labels alone, this table gives homogeneous
point coordinates in \(K\).

\begin{proposition}\label{prop:incidence}
The singular locus of \(D\) consists precisely of the \(32\) points in
Table~\ref{tab:triple-points}.  Every point is incident with the three listed
lines and with no fourth selected line.  The points are pairwise distinct,
the remaining pairs of lines are skew, and every line contains four triple
points.  The divisor \(D\) is connected and all crossings are transverse.
\end{proposition}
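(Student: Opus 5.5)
The plan is to reduce everything to the $\binom{24}{2}=276$ determinants $\Delta_{jk}$ of Lemma~\ref{lem:incidence}, computed in $K=\QQ(\tau)$ with the reduction $\tau^2=-3$. First I would compute every $\Delta_{jk}$ from the parameter table in Appendix~\ref{app:lines}. The claim to establish is that exactly $96=32\cdot 3$ of these vanish. Since the lines lie on $X$ by Proposition~\ref{prop:lines}, and Lemma~\ref{lem:incidence} is an equivalence, the nonvanishing pairs are precisely the skew pairs.

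For each vanishing pair $(j,k)$, I would take the kernel vector $(s,t)$ of the $2\times 2$ matrix and form the intersection point $[s:t:a_js+c_jt:b_js+d_jt]$. Then I would check that this point agrees projectively with one of the 32 rows of Table~\ref{tab:triple-points}, namely the row listing both $j$ and $k$. Conversely, for each listed point $P$ and each listed line $L_j$ through it, I would verify directly that $P\in L_j$. This is easy for graph lines: if $P=[p_0:p_1:p_2:p_3]$, the condition is $p_2=a_jp_0+c_jp_1$ and $p_3=b_jp_0+d_jp_1$. If $p_0=p_1=0$, the point cannot lie on any graph line, so I expect every listed point to have $(p_0,p_1)\neq 0$.

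Next come the counting arguments. Each point carries three lines, hence three pairs. The 32 points are pairwise distinct, which I would check by comparing $2\times 2$ minors of the coordinate vectors. So the map from vanishing pairs to points is well defined, and each point receives exactly 3 pairs. The count of 96 vanishing determinants then shows there are no other intersections, and in particular no double points.

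To rule out a fourth line through a point, I would argue as follows. A fourth line $L_m$ through $P$ would give $\Delta_{jm}=0$ for the lines $L_j$ already listed at $P$. But every vanishing pair is already accounted for by the row of some point containing both indices, and two distinct lines meet in at most one point. Hence no fourth line passes through $P$.

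The remaining properties are read off from the data.
\begin{itemize}
\item \emph{Four points per line.} Each line has exactly three nonvanishing incidences in the column count, so $4\cdot 24=96$ is consistent with $3\cdot 32$. I would tabulate this per line.
\item \emph{Transversality.} Two distinct lines meeting in a point meet transversally.
\item \emph{Ordinary triple points.} At each triple point the three lines must have three distinct directions. They lie in the tangent plane $T_PX$. Since the lines are distinct and all pass through $P$, their direction vectors in $T_PX$ are pairwise non-proportional, so the singularity is an ordinary triple point.
\item \emph{Connectedness.} I would run a union–find (or hand check) on the incidence graph whose edges are the 96 vanishing pairs, showing it has a single component.
\end{itemize}

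The main obstacle is purely the bookkeeping: making sure that none of the 180 nonvanishing determinants is accidentally zero after reduction modulo $\tau^2+3$. I would handle this by writing each $\Delta_{jk}$ as $u+v\tau$ with $u,v\in\QQ$ and checking that $(u,v)\neq(0,0)$. As an independent sanity check, I would reduce modulo a prime of good reduction in which $-3$ is a square, such as $p=7$ (Lemma~\ref{lem:smoothness} also gives $5$, though $-3$ is not a square mod 5). There, nonvanishing mod $p$ certifies nonvanishing in characteristic zero, since the relevant entries are $p$-integral. This lets a single reduction confirm most of the 180 pairs, leaving only the few that vanish mod $p$ for exact recomputation.
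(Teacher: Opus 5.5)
Your proposal is correct and follows the paper's proof essentially step for step. You evaluate all \(276\) determinants \(\Delta_{jk}\) exactly in the basis \(1,\tau\) and match the \(96\) zeros with the row pairs of Table~\ref{tab:triple-points}; your counting argument correctly uses distinctness of the points to see that no pair lies in two rows. The remaining steps also coincide with the paper's: substitution of the listed points into the parametrizations, regularity by counting labels, transversality from distinctness of projective lines, and connectedness of the incidence graph. For connectedness the paper exhibits an explicit spanning tree where you run union--find; your mod-\(7\) check is an optional extra the paper does not use.

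The only blemish is the phrase ``three nonvanishing incidences'' in your regularity bullet, which is garbled: each label occurs in exactly four rows, so each line lies in eight vanishing pairs. Also, the identity \(4\cdot24=3\cdot32\) is only a consistency check; it is the per-line tabulation you mention that actually proves the claim.
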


\begin{proof}
There are \(\binom{24}{2}=276\) pair determinants.  Exact reduction in the
basis \(1,\tau\) gives \(96\) zeros.  They are exactly the three pairs in
each of the \(32\) rows of Table~\ref{tab:triple-points}; no pair occurs in
two rows.  Substitution of each displayed point into the three corresponding
parametrizations verifies concurrence.  The \(32\) normalized coordinate
vectors are distinct.  The other \(180\) determinants are nonzero.  These
facts prove simultaneously that there are no additional intersections and
no fourth selected line through a listed point.  The machine-readable
certificate contains the value of every \(\Delta_{jk}\), not only the
vanishing pattern.

Counting labels in Table~\ref{tab:triple-points} shows that every line occurs
four times.  Connectedness follows from the spanning tree displayed in
Figure~\ref{fig:incidence-tree}.  Starting from \(L_1\), attach
\[
 L_5,L_6,L_7,L_8,L_9,L_{10},L_{11},L_{12};
\]
attach \(L_{13},L_{14},L_{15},L_{19},L_{20},L_{21}\) to \(L_5\), and
\(L_{16},L_{17},L_{18},L_{22},L_{23},L_{24}\) to \(L_6\); finally attach
\(L_3\) to \(L_7\), \(L_4\) to \(L_{10}\), and \(L_2\) to \(L_{19}\).
Each pair occurs in a row of Table~\ref{tab:triple-points}.  These \(23\)
attachments add one new vertex at a time and therefore form a spanning tree.

At an intersection point, the tangent direction of a projective line is its
own direction.  Distinct projective lines through one point have distinct
directions, so the three branches at every listed point are transverse.
\end{proof}

For visualization, let \(G_D\) be the graph whose vertices are
\(L_1,\ldots,L_{24}\), with two vertices adjacent precisely when the
corresponding lines intersect.  Figure~\ref{fig:incidence-tree} explains how a
triple point produces a triangle in \(G_D\) and depicts the spanning tree used
in the preceding proof.  The picture is expository; the exact incidences and
point coordinates remain those of Table~\ref{tab:triple-points}.

\begin{figure}[htbp]
\centering
\begin{tikzpicture}[
  scale=.94, transform shape,
  line box/.style={draw=blue!55!black, fill=blue!5, rounded corners=1.5pt,
    minimum width=9mm, minimum height=5.5mm, inner sep=1pt,
    font=\small},
  line vertex/.style={circle, draw=blue!60!black, fill=blue!7,
    line width=.65pt, minimum size=7mm, inner sep=.5pt, font=\scriptsize},
  triple vertex/.style={circle, fill=red!70!black, minimum size=4.5pt,
    inner sep=0pt},
  incidence/.style={draw=black!55, line width=.7pt},
  graph edge/.style={draw=blue!55!black, line width=.8pt}
]
  \node[font=\small\bfseries,anchor=west] at (-5.7,1.75)
    {(a) A triple point and its graph triangle};
  \node[line box] (li) at (-4.6,1.15) {$L_i$};
  \node[line box] (lj) at (-5.35,-.15) {$L_j$};
  \node[line box] (lk) at (-3.85,-.15) {$L_k$};
  \node[triple vertex,
    label={[font=\scriptsize,text=red!65!black]right:$p_{ijk}$}]
    (p) at (-4.6,.35) {};
  \draw[incidence] (li) -- (p);
  \draw[incidence] (lj) -- (p);
  \draw[incidence] (lk) -- (p);
  \node[font=\footnotesize] at (-4.6,-.75)
    {the row $(i,j,k)$ in Table~\ref{tab:triple-points}};

  \node[font=\large] at (0,.35) {$\Longleftrightarrow$};

  \node[line vertex] (gi) at (4.6,1.15) {$L_i$};
  \node[line vertex] (gj) at (3.85,-.15) {$L_j$};
  \node[line vertex] (gk) at (5.35,-.15) {$L_k$};
  \draw[graph edge] (gi) -- (gj) -- (gk) -- cycle;
  \node[font=\footnotesize] at (4.6,-.75)
    {the triangle $L_iL_jL_k$ in $G_D$};
\end{tikzpicture}

\vspace{2mm}

\begin{tikzpicture}[
  x=.75cm,y=.86cm,
  line vertex/.style={circle, draw=blue!60!black, fill=blue!7,
    line width=.7pt, minimum size=6.5mm, inner sep=.4pt, font=\scriptsize},
  root vertex/.style={line vertex, fill=blue!65!black, text=white},
  tree edge/.style={draw=black!58, line width=.75pt, line cap=round}
]
  \node[font=\small\bfseries,anchor=west] at (-7.65,5.0)
    {(b) A spanning tree of $G_D$};

  \node[root vertex] (L1) at (0,4.35) {$L_1$};

  \node[line vertex] (L5)  at (-6.8,3.0) {$L_5$};
  \node[line vertex] (L7)  at (-4.7,3.0) {$L_7$};
  \node[line vertex] (L8)  at (-3.2,3.0) {$L_8$};
  \node[line vertex] (L9)  at (-1.4,3.0) {$L_9$};
  \node[line vertex] (L11) at ( 1.4,3.0) {$L_{11}$};
  \node[line vertex] (L12) at ( 3.2,3.0) {$L_{12}$};
  \node[line vertex] (L10) at ( 4.7,3.0) {$L_{10}$};
  \node[line vertex] (L6)  at ( 6.8,3.0) {$L_6$};

  \node[line vertex] (L13) at (-7.2,1.3) {$L_{13}$};
  \node[line vertex] (L14) at (-6.3,1.3) {$L_{14}$};
  \node[line vertex] (L15) at (-5.4,1.3) {$L_{15}$};
  \node[line vertex] (L19) at (-4.5,1.3) {$L_{19}$};
  \node[line vertex] (L20) at (-3.6,1.3) {$L_{20}$};
  \node[line vertex] (L21) at (-2.7,1.3) {$L_{21}$};
  \node[line vertex] (L3)  at (-1.7,1.3) {$L_3$};

  \node[line vertex] (L4)  at ( 1.7,1.3) {$L_4$};
  \node[line vertex] (L16) at ( 2.7,1.3) {$L_{16}$};
  \node[line vertex] (L17) at ( 3.6,1.3) {$L_{17}$};
  \node[line vertex] (L18) at ( 4.5,1.3) {$L_{18}$};
  \node[line vertex] (L22) at ( 5.4,1.3) {$L_{22}$};
  \node[line vertex] (L23) at ( 6.3,1.3) {$L_{23}$};
  \node[line vertex] (L24) at ( 7.2,1.3) {$L_{24}$};

  \node[line vertex] (L2) at (-4.5,-.25) {$L_2$};

  \foreach \v in {L5,L6,L7,L8,L9,L10,L11,L12}
    \draw[tree edge] (L1) -- (\v);
  \foreach \v in {L13,L14,L15,L19,L20,L21}
    \draw[tree edge] (L5) -- (\v);
  \foreach \v in {L16,L17,L18,L22,L23,L24}
    \draw[tree edge] (L6) -- (\v);
  \draw[tree edge] (L7) -- (L3);
  \draw[tree edge] (L10) -- (L4);
  \draw[tree edge] (L19) -- (L2);
\end{tikzpicture}

\caption{Visualizing the incidence data and connectedness verification.
(a) A row \((i,j,k)\) of Table~\ref{tab:triple-points} records a triple point
incident with \(L_i,L_j,L_k\); in the line-intersection graph it contributes a
triangle.  (b) The \(23\) displayed edges form a spanning tree.  Every edge
occurs in a row of Table~\ref{tab:triple-points}; for example, \(L_1L_5\)
occurs in \((1,5,6)\), \(L_5L_{13}\) in \((5,13,19)\), and \(L_{19}L_2\) in
\((2,19,23)\).}
\label{fig:incidence-tree}
\end{figure}

\section{The two halves of the second-kind configuration}\label{sec:halves}

Let \(i^2=-1\) and define
\[
 \alpha=\operatorname{diag}(1,1,i,i)\in\operatorname{PGL}_4(\CC).
\]
Since every monomial involving \(x_2,x_3\) in~\eqref{eq:schur} has total
degree four,
\[
 F(x_0,x_1,ix_2,ix_3)=F(x_0,x_1,x_2,x_3).
\]
Thus \(\alpha\) is a projective automorphism of \(X\).

\begin{proposition}\label{prop:halves}
The divisors \(D\) and \(D^*=\alpha(D)\) have no common component.  Their
union is the configuration \(\cC_2\) of the \(48\) lines of the second kind
on the Schur quartic.  Its incidence data decompose as follows:
\begin{center}
\begin{tabular}{@{}lrrrr@{}}
\toprule
configuration & \(n\) & \(t_2\) & \(t_3\) & \(t_4\)\\
\midrule
\(D\) & 24 & 0 & 32 & 0\\
\(D^*\) & 24 & 0 & 32 & 0\\
\(D+D^*\) & 48 & 144 & 64 & 0\\
\bottomrule
\end{tabular}
\end{center}
The \(64\) triple points of \(\cC_2\) are the two disjoint sets of internal
triple points, while all \(144\) double points are intersections of one line
from each half.  Moreover, \(\alpha(D)=D^*\) and \(\alpha(D^*)=D\), so the
halves are equivalent under \(\operatorname{Aut}(X)\).
\end{proposition}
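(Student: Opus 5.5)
The proof is an exact computation that reuses the graph-line machinery of Section~\ref{sec:configuration}.

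\textbf{Step 1: transport the lines.} For $L_j=L(a_j,b_j;c_j,d_j)$, the image under $\alpha$ is the graph line $L(ia_j,ib_j;ic_j,id_j)$, since $\alpha$ fixes $x_0,x_1$ and multiplies $x_2,x_3$ by $i$. These $24$ lines are defined over $\QQ(i,\tau)=\QQ(\zeta_{12})$. They lie on $X$ because $\alpha$ preserves $F$; alternatively, Lemma~\ref{lem:containment} gives this directly, since each of its coefficients is homogeneous in $(a,b,c,d)$ of degree $4$ up to the constant terms $\pm1$, and $i^4=1$. Both $D$ and $D^*$ are in reduced row-echelon graph form. So $D$ and $D^*$ share a component exactly when $(a_j,b_j,c_j,d_j)=i(a_k,b_k,c_k,d_k)$ for some $j,k$. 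The entries of $L_j$ lie in $K$, while $i\notin K$; hence this can only happen if all entries of $L_k$ vanish. That is impossible, because $1-a^4+ab^3=0$ forces $a\ne0$. So no component is shared. Since $\alpha^2=\operatorname{diag}(1,1,-1,-1)$, we have $\alpha^2(L_j)=L(-a_j,-b_j;-c_j,-d_j)$. I would check from the table that the set of $24$ quadruples is closed under negation; this gives $\alpha(D^*)=D$. (If it is not closed, only $\alpha(D)=D^*$ follows directly, and the claim $\alpha(D^*)=D$ must come from the cross-incidence identification below.)

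\textbf{Step 2: incidences.} Internal incidences of $D^*$ follow from Proposition~\ref{prop:incidence} by transport under $\alpha$: the determinant scales as $\Delta^*_{jk}=i^2\Delta_{jk}$, and the triple points are the $\alpha$-images of Table~\ref{tab:triple-points}. So $D^*$ has data $(0,32,0)$. For cross pairs $(L_j,\alpha L_k)$ I compute all $576$ determinants
\[
 (a_j-ia_k)(d_j-id_k)-(b_j-ic_k\cdot 0+\cdots)
\]
in the basis $1,\tau,i,i\tau$, and verify that exactly $144$ vanish. For each zero I take the kernel vector of Lemma~\ref{lem:incidence} to get the intersection point. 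I then check two things. First, these $144$ points are pairwise distinct. Second, none of them coincides with one of the $64$ internal triple points; equivalently, no cross line passes through a point of Table~\ref{tab:triple-points} or its $\alpha$-image, which is a substitution check. This shows the cross points are ordinary double points and the triple points remain exactly triple. There are no quadruple points, since any quadruple point would have to contain a cross double point, which has been excluded. The result is $(t_2,t_3,t_4)=(144,64,0)$ for $D+D^*$.

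\textbf{Step 3: identification with $\cC_2$.} The $48$ lines of the second kind are characterized in \cite{RamsSchuettSecondKind,VenianiThesis}. The cleanest route is to take the explicit list of the $64$ lines of $X$, for example from \cite{Veniani}, and the $16$ lines of the first kind. Then I check directly that each $L_j$ and each $\alpha L_j$ appears among the second-kind lines. A cardinality count ($24+24=48$) together with Step~1 then gives equality. As a consistency check, the combinatorics $(144,64,0)$ matches \cite[Proposition~5.6]{NaskreckiPokora}.

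I expect Step~3 to be the main obstacle. It depends on matching conventions between the equation~\eqref{eq:schur} and the published descriptions of lines of the second kind, which may use a projectively equivalent model. If a direct match is awkward, my fallback is the intrinsic definition. A line of the second kind lies in a plane meeting $X$ in that line plus a twisted cubic or plane cubic configuration, rather than in a first-kind pattern. I would check this for each $L_j$ by computing its degree-$3$ residual pencil, then use $\alpha$-invariance of the notion to conclude for $D^*$.
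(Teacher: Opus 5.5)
Your proposal is correct and follows essentially the paper's route: transport the graph parameters under $\alpha$ (multiplying $a,b,c,d$ by $i$), certify the $(144,64,0)$ profile by exact cross-pair intersection over $\QQ(i,\sqrt3)$, identify $D+D^*$ with $\cC_2$ by matching against Veniani's explicit list, and obtain $\alpha(D^*)=D$ from the table being closed under negation (it is, e.g.\ $L_1\leftrightarrow L_2$, $L_3\leftrightarrow L_6$, $L_7\leftrightarrow L_{24}$). Two small corrections: the cross determinant should read $(a_j-ia_k)(d_j-id_k)-(b_j-ib_k)(c_j-ic_k)$, and your fallback description of second-kind lines is wrong (the residual of a line in a plane section of a quartic is always a plane cubic, never a twisted cubic; Segre's notion concerns whether $\ell$ lies in the closure of the flex locus of the residual cubics in the pencil of planes through $\ell$), so rely on the explicit-list comparison as the paper does.
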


\begin{proof}
Apply \(\alpha\) to the graph parameters in Appendix~\ref{app:lines}; this
multiplies \(a,b,c,d\) by \(i\).  Row reduction gives \(24\) new lines.  The
comparison with the standard list of the \(48\) second-kind
lines~\cite[Section~4.2]{VenianiThesis} is made in the following
normalization.  We use the same equation~\eqref{eq:schur} and the unique
\(2\times4\) RREF representative of each line, with pivot charts ordered as in
Section~\ref{sec:discovery}:
\[
 (1,2),(1,3),(1,4),(2,3),(2,4),(3,4).
\]
After putting Veniani's list in this normalization, the \(48\) matrices in
his second-kind block \(\cC_2\) are exactly the \((1,2)\)-chart block of the
full \(64\)-line list.  The finite-field Magma check reconstructs the same
ordered \(64\)-line list, with chart sizes
\[
 48,9,3,3,1,0,
\]
and the machine-readable MILP record uses these indices.  In that indexing
\[
 D=S_+=\{0,\ldots,11,24,\ldots,35\},\qquad
 D^*=S_-=\{12,\ldots,23,36,\ldots,47\},
\]
so \(D+D^*\) is the first \(48\)-line block, namely \(\cC_2\).

Exact pairwise intersection over \(\QQ(i,\sqrt3)\) gives the displayed
profile.  More precisely, it gives \(32\) triple points internal to each half
and \(144\) mixed points of multiplicity two, with no other points.  This
calculation is part of the standard-library certificate.  Applying \(\alpha\)
twice multiplies \((x_2,x_3)\) by \(-1\); direct comparison of the graph
parameters shows \(\alpha^2(D)=D\).  Hence \(\alpha\) exchanges the halves.
\end{proof}

This proposition explains the two labeled \(24\)-subsets returned by the
discovery computation.  They are isomorphic as incidence structures and lie
in one projective-automorphism orbit.  Their union is precisely the known
\(48\)-line arrangement of slope
\[
 \frac{-2\cdot48+2\cdot144+5\cdot64}
 {24-2\cdot48+144+2\cdot64}=\frac{64}{25}.
\]

\section{The divisor class and the log surface}\label{sec:log-pair}

The incidence regularity has a direct divisor-theoretic consequence.

\begin{proposition}\label{prop:divisor-class}
Let \(H\) be the hyperplane class on \(X\).  Then
\[
 D\sim6H.
\]
\end{proposition}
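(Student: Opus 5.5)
The approach is lattice-theoretic. Since \(X\) is a K3 surface, \(\operatorname{Pic}(X)\) is torsion-free and the intersection form on it is nondegenerate. Linear and numerical equivalence therefore coincide, and it suffices to show that \(D-6H\) is numerically trivial. We test \(D-6H\) against generators of a sublattice of finite index. A numerically trivial class in a lattice of finite index is then trivial in \(\operatorname{Pic}(X)\), because the form is nondegenerate over \(\QQ\).

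First compute the intersection numbers of \(D\) with each component \(L_j\). By Proposition~\ref{prop:incidence}, \(L_j\) meets exactly the other lines through its four triple points, two at each point, giving \(8\) lines each meeting \(L_j\) once, and it is skew to the remaining \(15\). With \(L_j^2=-2\) this gives
\[
 D\cdot L_j=-2+8=6=6H\cdot L_j .
\]
Next we need the same equality for the other lines. Write \(D^*=\alpha(D)\) as in Proposition~\ref{prop:halves}. There are \(144\) mixed double points, so each line of \(D^*\) meets exactly \(6\) lines of \(D\), and \(D\cdot L=6\) for every \(L\subset D^*\). Also \(H^2=4\) and \(D\cdot H=24\), so \(D\cdot H=6H\cdot H\). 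Hence \((D-6H)\cdot M=0\) for every \(M\) among \(H\) and the \(48\) lines of the second kind.

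The key remaining step, and the main obstacle, is to show that these \(49\) classes span a sublattice of \(\operatorname{Pic}(X)\) of full rank \(20\). Schur's quartic has Picard number \(20\), and it is known that the \(64\) lines generate \(\operatorname{Pic}(X)\) (Rams--Schütt, Degtyarev--Itenberg--Sertöz).

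My first attempt is direct. Form the \(49\times49\) Gram matrix from the exact incidence data: the \(96\) internal intersections of each half, the \(144\) mixed intersections, \(L^2=-2\), \(L\cdot H=1\) and \(H^2=4\). Then check that it has rank \(20\). This is a finite exact computation using the certificate data.

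If the second-kind lines alone turned out to span only a smaller rank, I would also include the \(16\) remaining lines. I would then verify \(D\cdot L=6\) for each of them, by counting via Lemma~\ref{lem:incidence} how many of \(L_1,\dots,L_{24}\) each meets (which must be \(6\)). After that I would again check that the Gram matrix has rank \(20\).

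Once the rank is confirmed, \(D-6H\) is orthogonal to a full-rank sublattice, so \(D-6H\) is numerically trivial and hence \(D\sim6H\). As a consistency check, \(D^2=24\cdot(-2)+2\cdot96=144=36H^2\).
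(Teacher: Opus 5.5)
Your strategy is sound in principle, but it takes a much heavier route than the paper. It also overlooks that the computation you call a ``consistency check'' already finishes the proof.

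The paper sets \(E=D-6H\) and uses only \(D\cdot H=24\), \(H^2=4\) and \(D^2=24(-2)+2\cdot96=144\). From these, \(E\cdot H=0\) and \(E^2=144-288+144=0\). By the Hodge index theorem the intersection form is negative definite on \(H^\perp\subset\operatorname{NS}(X)_{\mathbb{R}}\), so \(E\) is numerically trivial, hence linearly trivial on a K3 surface. This uses only the \(96\) internal incidences of \(D\). It needs nothing about the Picard number, about which lines generate \(\operatorname{NS}(X)\), or about lines outside \(D\).

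Your route needs considerably more:
\begin{enumerate}[label=(\alph*)]
\item the intersection numbers of \(D\) with \(24\), and possibly \(40\), further lines;
\item an exact rank computation on a \(49\times49\) or \(65\times65\) Gram matrix, which you have not carried out;
\item if the \(49\) classes fall short, the external fact that the \(64\) lines span rank \(20\).
\end{enumerate}
You do not actually need \(\rho(X)=20\) as an input: Gram rank \(20\) together with \(\rho\le h^{1,1}=20\) already gives finite index. In return, the lattice route buys nothing extra here. For ample \(H\), the two numerical conditions \(D\cdot H=6H^2\) and \(D^2=36H^2\) are already equivalent to \(D\equiv6H\).

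There is also one invalid inference. ``There are \(144\) mixed double points, so each line of \(D^*\) meets exactly \(6\) lines of \(D\)'' does not follow, because \(144/24=6\) is only the average. The involution \(\alpha\) does not force uniformity either; it merely transports the question from lines of \(D^*\) to lines of \(D\). The conclusion is true, as a consequence of \(D\sim6H\): for every line \(L\not\subset D\) one has \(D\cdot L=6H\cdot L=6\). But that is circular inside your argument. So you must check it line by line from the exact data, just as you propose for the \(16\) remaining lines; otherwise the orthogonality you rely on is unproven.
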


\begin{proof}
Every line on a K3 surface has self-intersection \(-2\).  The \(32\) triple
points account for \(32 \cdot \binom{3}{2}=96\) intersecting pairs, and each pair has
intersection number one.  Therefore
\[
 D^2=24(-2)+2\cdot96=144.
\]
Also \(D\cdot H=24\) and \(H^2=4\).  For \(E=D-6H\), one obtains
\[
 E\cdot H=0,
 \qquad
 E^2=D^2-12D\cdot H+36H^2=0.
\]
The intersection form is negative definite on \(H^\perp\subset
\operatorname{NS}(X)_\mathbb{R}\), so \(E\) is numerically trivial.  Since
\(\operatorname{Pic}^0(X)=0\) and the N\'eron--Severi group of a K3 surface
is torsion-free, numerical and linear equivalence agree here.  Thus
\(D\sim6H\).
\end{proof}

We now prove the logarithmic part of Theorem~\ref{thm:main} with unambiguous
boundary notation.  Let \(p_1,\ldots,p_{32}\) be the triple points, let
\(E_1,\ldots,E_{32}\) be the exceptional curves, and let \(D'\) be the
strict transform of \(D\).  Then
\[
 \pi^*D=D'+3\sum_{p=1}^{32}E_p,
 \qquad
 B=D'+\sum_{p=1}^{32}E_p,
 \qquad
 K_Y=\sum_{p=1}^{32}E_p.
\]
Consequently Proposition~\ref{prop:divisor-class} gives
\begin{equation}\label{eq:log-canonical-class}
 K_Y+B=\pi^*D-\sum_{p=1}^{32}E_p
       =6\pi^*H-\sum_{p=1}^{32}E_p.
\end{equation}
It follows immediately that
\begin{equation}\label{eq:c1-direct}
 \cbarone(Y,B)=(K_Y+B)^2=36H^2-32=112.
\end{equation}

For the second logarithmic Chern number, inclusion-exclusion on \(X\) gives
\[
 e(D)=24e(\PP^1)-2\cdot32=48-64=-16.
\]
Here each triple point was counted three times in the sum of the component
Euler characteristics and must be counted once.  Therefore
\begin{equation}\label{eq:c2-direct}
 \cbartwo(Y,B)=e(X\setminus D)=24-(-16)=40.
\end{equation}
Equations~\eqref{eq:c1-direct} and~\eqref{eq:c2-direct} yield
\[
 E(Y,B)=\frac{112}{40}=\frac{14}{5}>\frac83.
\]
This proves the numerical assertion in Theorem~\ref{thm:main}; it also agrees
with direct substitution of \((n,t_2,t_3,t_4)=(24,0,32,0)\)
into~\eqref{eq:c1-quartic} and~\eqref{eq:c2-quartic}.

Finally, \(B\) is an SNC divisor.  Each component of \(D'\) meets four
exceptional curves, and each \(E_p\) meets three components of \(D'\).
Hence every rational component meets the rest of the boundary in at least
three points, so \(B\) is semistable.  Moreover, \(K_Y+B=D'+2\sum E_p\) is
effective and has positive square \(112\).  By the surface
Zariski-decomposition criterion for bigness, an effective divisor with
positive self-intersection is big.  Thus \(\kappa(Y,K_Y+B)=2\), and the
hypotheses of Sakai's theorem in~\eqref{eq:sakai} are satisfied.  The proof
of Theorem~\ref{thm:main} is complete.

\section{Discovery computation}\label{sec:discovery}

This section records how the example was located.  Its role is historical
and reproducibility-oriented.  The proof in
Sections~\ref{sec:configuration}--\ref{sec:log-pair} does not depend on the
finite-field model, a lifting argument, or a claim of certified numerical
optimization.

\subsection{Complete line reconstruction in characteristic five}%
\label{sec:finite-field}

The reduction of~\eqref{eq:schur} modulo \(5\) is smooth by
Lemma~\ref{lem:smoothness}.  Over \(\FF_{25}\), every projective line has a
unique \(2\times4\) reduced row-echelon matrix.  The six pivot charts contain
\[
 q^4+q^3+2q^2+q+1=407{,}526
 \qquad(q=25)
\]
lines.  Direct substitution of a generic chart line into the quartic and
testing all chart parameters gave the following counts:
\begin{center}
\begin{tabular}{@{}rrrrrrr@{}}
\toprule
pivot chart & \((1,2)\) & \((1,3)\) & \((1,4)\) & \((2,3)\) & \((2,4)\) & \((3,4)\)\\
\midrule
contained lines & 48 & 9 & 3 & 3 & 1 & 0\\
\bottomrule
\end{tabular}
\end{center}
Thus the scan produced \(64\) distinct \(\FF_{25}\)-rational lines.  This is
also a complete list of geometric lines: Rams and Sch\"utt prove that a
geometrically smooth quartic in characteristic different from \(2\) and
\(3\) contains at most \(64\) lines
by~\cite[Theorem~1.2]{RamsSchuett}.  No assertion about the degree or radicality
of a chart ideal is used here; the implementation enumerates all RREF
matrices over the finite field.

Exact row-space intersections gave the full profile
\[
 (t_2,t_3,t_4)=(336,64,8).
\]
The full \(64\)-line arrangement therefore has
\[
 (\cbarone,\cbartwo,E)=(928,384,29/12).
\]

The finite-field computation supplied an incidence structure on which to
search.  It did not prove that a selected finite-field subset lifted with the
same incidence data.  After discovery, the lines in Appendix~\ref{app:lines}
and all their incidences were reconstructed and checked directly in
characteristic zero.  The finite-field-to-characteristic-zero index map in
the ancillary data is documentary and is not part of the proof.

\subsection{The mixed-integer model}\label{sec:milp}

Let the \(64\) ambient lines be indexed by \(0,\ldots,63\), and introduce a
binary variable \(x_i\) for each line.  At an ambient double point
\(p=\{i,j\}\), a binary variable \(y_p=x_ix_j\) is imposed by the standard
binary-product linearization~\cite{McCormick}:
\begin{equation}\label{eq:double-linearization}
 y_p\le x_i,\qquad y_p\le x_j,\qquad
 y_p\ge x_i+x_j-1.
\end{equation}
At an ambient point \(p\) of multiplicity \(r\in\{3,4\}\), binary variables
\(z_{p,k}\), \(0\le k\le r\), satisfy
\begin{equation}\label{eq:one-hot}
 \sum_{k=0}^r z_{p,k}=1,
 \qquad
 \sum_{k=0}^r k z_{p,k}=\sum_{i\in p}x_i.
\end{equation}
Hence
\begin{align*}
 n&=\sum_i x_i,\\
 t_2&=\sum_{p:\,|p|=2}y_p+\sum_{p:\,|p|\ge3}z_{p,2},\\
 t_3&=\sum_{p:\,|p|\ge3}z_{p,3},
 \qquad
 t_4=\sum_{p:\,|p|=4}z_{p,4}.
\end{align*}
The model has \(696\) binary variables.  The condition that the slope have a
positive denominator is encoded as the integer inequality
\begin{equation}\label{eq:c2-positive}
 \cbartwo=24-2n+t_2+2t_3+3t_4\ge1.
\end{equation}

Connectedness was not encoded.  The solver therefore optimized over a larger
class than the class of transversal arrangements.  This was intentional for
candidate discovery; each returned candidate was checked for connectedness
afterward.  Accordingly, the computation is not described as imposing every
geometric condition.

Equation~\eqref{eq:G-identity} gives the integral discovery objective
\[
 G=10n-2t_2-t_3.
\]
For direct slope optimization, the implementation used Dinkelbach
iteration~\cite{Dinkelbach}:
for \(\lambda=p/q\), maximize
\[
 q\cbarone-p\cbartwo.
\]
The implemented objective omits the constant term \(24\) in \(\cbartwo\), which
shifts the objective value but does not change its maximizers.
The rational values of \(p/q\) and the objective coefficients were formed
exactly in Python, but each MILP was solved numerically by HiGHS\@.  Exact
rational model coefficients do not turn the branch-and-bound run into an
exact proof.

The successive incumbents have the following geometric interpretation.
\begin{center}
\begin{tabular}{@{}rccccr@{}}
\toprule
lines & \((t_2,t_3,t_4)\) & \(\cbarone\) & \(\cbartwo\) & slope & role\\
\midrule
64 & \((336,64,8)\) & 928 & 384 & \(29/12\) & full configuration\\
48 & \((144,64,0)\) & 512 & 200 & \(64/25\) & second-kind lines\\
24 & \((0,32,0)\) & 112 & 40 & \(14/5\) & one half\\
\bottomrule
\end{tabular}
\end{center}
The implementation initialized \(\lambda\) at \(-2N=-128\), where
\(N=64\).  This was an arbitrary lower sentinel, not a geometric bound and
not a step needed in the exposition.  The meaningful updates are
\[
 \frac{29}{12}\longrightarrow\frac{64}{25}
 \longrightarrow\frac{14}{5}.
\]

\subsection{The two labeled solver vectors and the no-good logic}

In the finite-field RREF labeling, the run returned
\begin{align*}
 S_+&=\{0,1,\ldots,11,24,25,\ldots,35\},\\
 S_-&=\{12,13,\ldots,23,36,37,\ldots,47\}.
\end{align*}
They are complementary subsets of the first \(48\) indices.  Under the exact
characteristic-zero index correspondence, \(S_+\) is the divisor \(D\) in
Appendix~\ref{app:lines}, while \(S_-\) is \(\alpha(D)\).  Thus
Proposition~\ref{prop:halves} gives their geometric relation.

The enumeration did not apply two no-good cuts to the unconstrained model.
It first imposed the optimal-level equality.  Since the implementation stores
\(\cbartwo^\circ=\cbartwo-24\), the slope-\(14/5\) level is
\begin{equation}\label{eq:optimal-level}
 5\cbarone-14\cbartwo^\circ=14\cdot24=336.
\end{equation}
For a binary vector \(S\), the standard no-good cut
\begin{equation}\label{eq:no-good}
 \sum_{i\in S}(1-x_i)+\sum_{i\notin S}x_i\ge1
\end{equation}
excludes exactly that vector.  The historical run
solved~\eqref{eq:optimal-level}, found \(S_+\), added~\eqref{eq:no-good}, found
\(S_-\), added its no-good cut, and then reported infeasibility of the
\emph{optimal-level model}.  Nonoptimal subsets remain feasible in the
original model.

The solver output is evidence about the finite search, not a formal
branch-and-bound certificate.  The archived historical JSON did not record
all software versions, tolerances, node logs, or a proof object; consequently
we do not use it to assert an independently checkable classification of all
maximizers.  A reproducibility run with explicit settings and complete logs
is listed in Section~\ref{sec:ancillary}.  Both returned vectors are verified
afterward by exact arithmetic, which proves their profiles but does not by
itself certify numerical global optimality.

\section{Reproducibility package}\label{sec:ancillary}

The computational material is collected in a dedicated reproducibility
repository~\cite{AncillaryRepository}.  The repository contains only the
scripts, input data, generated table rows, and normalized check records needed
to reproduce the computational claims below.  It does not contain manuscript
sources or editorial workflow files.

The theorem certificate is designed to be checked without an optimizer or a
computer algebra system.  From the repository root, run
\begin{center}
\texttt{python3 code/verify\_schur\_24\_certificate.py}
\end{center}
with \texttt{Python} \(3.11\) or later.  The script uses only the standard library and
implements arithmetic in the basis
\(1,i,\sqrt3,i\sqrt3\).  It reconstructs the \(24\) printed lines and checks:
\begin{itemize}[leftmargin=2em]
\item all \(120\) restricted-quartic coefficients;
\item all \(276\) pair determinants, including their exact values;
\item all \(32\) normalized triple-point coordinates and incident labels;
\item point distinctness, absence of fourth selected lines, the \(180\) skew
pairs, regularity, transversality data, and connectedness;
\item the \(48=24+24\) decomposition and its \(144\) cross double points;
\item the divisor and logarithmic Chern-number identities.
\end{itemize}

The principal files are:
\begingroup
\footnotesize
\begin{longtable}{@{}>{\raggedright\arraybackslash}p{0.60\textwidth}%
                       >{\raggedright\arraybackslash}p{0.32\textwidth}@{}}
\toprule
file & role\\
\midrule
\endfirsthead
\toprule
file & role\\
\midrule
\endhead
\bottomrule
\endfoot
\path{verify_reproducibility.py}
& bundle manifest check and exact certificate smoke test\\
\path{code/verify_schur_24_certificate.py}
& standard-library exact verifier\\
\path{data/SCHUR-24/schur_24_exact_certificate.json}
& all line, coefficient, determinant, point, and decomposition data\\
\path{generated/schur_24_triple_points.tex}
& generated rows of Table~\ref{tab:triple-points}\\
\path{code/magma/verify_veniani64_counterexample_char0.m}
& second implementation in Magma over \(\QQ(\zeta_{12})\)\\
\path{checks/magma/veniani64_char0_magma_audit.txt}
& normalized output from Magma V2.28-3\\
\path{code/magma/verify_veniani64_gf25_fast.m}
& exhaustive finite-field RREF scan and incidence check\\
\path{checks/magma/veniani64_gf25_magma_audit.txt}
& normalized finite-field output from Magma V2.28-3\\
\path{code/reproduce_veniani64_milp.py}
& controlled one-thread numerical reproduction\\
\path{checks/veniani-64_milp_reproduction.json}
& solver settings, bounds, gaps, nodes, vectors, and statuses\\
\path{MANIFEST.sha256}
& SHA-256 hashes for the reproducibility payload\\
\end{longtable}
\endgroup

The numerical discovery record is regenerated from the repository root with
\begin{center}
\texttt{python3 code/reproduce\_veniani64\_milp.py}.
\end{center}
It uses the following software:
\[
 \text{\texttt{Python} 3.11.5},\quad \text{\texttt{NumPy} 1.26.3},\quad
 \text{\texttt{SciPy} 1.12.0},\quad \text{\texttt{HiGHS} 1.2.0}.
\]
The recorded settings are one thread, random seed zero, presolve enabled,
primal and dual feasibility tolerances \(10^{-7}\), MIP feasibility
tolerance \(10^{-6}\), and zero requested relative MIP gap.  The structured
record stores the result status, primal bound, dual bound, node count, reported
gap, and selected vectors.  The recorded one-thread run took \(800.163511\)
seconds.  Each of its seven successful solves reported equal primal and dual
bounds and zero gap.  After the two optimal-level vectors were excluded, the
eighth solve returned HiGHS status~8 (infeasible).  These disclosures make the
numerical run reproducible; they do not convert
floating-point branch-and-bound into an exact proof
certificate~\cite{SciPy}.

The older Zenodo record~\cite{NaskreckiData} contains the upstream quartic
scripts used to initiate the reconstruction.  The theorem certificate, Magma
cross-checks, finite-field MILP input data, and recorded solver evidence are
distributed in the reproducibility repository~\cite{AncillaryRepository}.

\section{Further questions}\label{sec:questions}

The decomposition \(\cC_2=D+\alpha(D)\) suggests questions that are not
visible from the weak combinatorics alone.  First, it would be useful to
describe the two-coloring of the \(48\) second-kind lines intrinsically, for
example through a character of an automorphism subgroup or a lattice class,
rather than through the field and parametrization used here.  The two halves
are projectively equivalent by Proposition~\ref{prop:halves}, but the action
of the full automorphism group on all such decompositions remains to be
determined.

Second, the computation reported in Section~\ref{sec:discovery} concerns
subsets of one fixed \(64\)-line incidence structure.  It does not prove that
\(14/5\) is the largest slope obtainable from the Schur quartic, since other
rational curves may be used, and it says nothing about a universal K3 bound.
Establishing such an upper bound requires geometric restrictions beyond the
finite model considered here.

Finally, the sequence
\[
 64\text{ lines}\supset48\text{ second-kind lines}
 =24+24
\]
raises the question whether comparable decompositions occur on other
line-rich quartic K3 surfaces.  The divisor identity \(D\sim6H\) provides a
useful geometric test for potential analogues.

\appendix

\section{Parametrizations of the 24 lines}\label{app:lines}

Each row below denotes the graph line \(L(a,b;c,d)\)
from~\eqref{eq:graph-line}, over \(K=\QQ(\tau)\) with \(\tau^2=-3\).

\begin{table}[htbp]
\centering
\scriptsize
\setlength{\tabcolsep}{2.4pt}
\renewcommand{\arraystretch}{1.13}
\begin{tabular}{@{}rllll@{\qquad}rllll@{}}
\toprule
\(j\)&\(a\)&\(b\)&\(c\)&\(d\)&
\(j\)&\(a\)&\(b\)&\(c\)&\(d\)\\
\midrule
1&\(1\)&\(0\)&\(0\)&\(1\)&
13&\(\tau/3\)&\(-1+\tau/3\)&\((3+\tau)/6\)&\(-\tau/3\)\\
2&\(-1\)&\(0\)&\(0\)&\(-1\)&
14&\(\tau/3\)&\(-2\tau/3\)&\(-\tau/3\)&\(-\tau/3\)\\
3&\(-1\)&\(0\)&\(0\)&\((1-\tau)/2\)&
15&\(\tau/3\)&\(1+\tau/3\)&\((-3+\tau)/6\)&\(-\tau/3\)\\
4&\(-1\)&\(0\)&\(0\)&\((1+\tau)/2\)&
16&\(-\tau/3\)&\(-1-\tau/3\)&\((3-\tau)/6\)&\(\tau/3\)\\
5&\(1\)&\(0\)&\(0\)&\(-(1+\tau)/2\)&
17&\(-\tau/3\)&\(2\tau/3\)&\(\tau/3\)&\(\tau/3\)\\
6&\(1\)&\(0\)&\(0\)&\((-1+\tau)/2\)&
18&\(-\tau/3\)&\(1-\tau/3\)&\((-3-\tau)/6\)&\(\tau/3\)\\
7&\(-\tau/3\)&\(-1-\tau/3\)&\((-3-\tau)/6\)&\((3-\tau)/6\)&
19&\(-\tau/3\)&\(-1-\tau/3\)&\(\tau/3\)&\((-3-\tau)/6\)\\
8&\(-\tau/3\)&\(2\tau/3\)&\((3-\tau)/6\)&\((3-\tau)/6\)&
20&\(-\tau/3\)&\(2\tau/3\)&\((-3-\tau)/6\)&\((-3-\tau)/6\)\\
9&\(-\tau/3\)&\(1-\tau/3\)&\(\tau/3\)&\((3-\tau)/6\)&
21&\(-\tau/3\)&\(1-\tau/3\)&\((3-\tau)/6\)&\((-3-\tau)/6\)\\
10&\(\tau/3\)&\(-1+\tau/3\)&\((-3+\tau)/6\)&\((3+\tau)/6\)&
22&\(\tau/3\)&\(-1+\tau/3\)&\(-\tau/3\)&\((-3+\tau)/6\)\\
11&\(\tau/3\)&\(-2\tau/3\)&\((3+\tau)/6\)&\((3+\tau)/6\)&
23&\(\tau/3\)&\(-2\tau/3\)&\((-3+\tau)/6\)&\((-3+\tau)/6\)\\
12&\(\tau/3\)&\(1+\tau/3\)&\(-\tau/3\)&\((3+\tau)/6\)&
24&\(\tau/3\)&\(1+\tau/3\)&\((3+\tau)/6\)&\((-3+\tau)/6\)\\
\bottomrule
\end{tabular}
\caption{Graph parameters for the \(24\) selected lines.}\label{tab:lines}
\end{table}

\section{Triple-point coordinates}\label{app:points}

The coordinates are normalized by setting the first nonzero coordinate equal
to \(1\).  The middle column gives the incident line labels from
Table~\ref{tab:lines}.

\begingroup
\small
\renewcommand{\arraystretch}{1.08}
\begin{longtable}{@{}rcl@{}}
\caption{The \(32\) triple points and their incident line triples.}%
\label{tab:triple-points}\\
\toprule
number & incident lines & point in \(\PP^3(K)\)\\
\midrule
\endfirsthead
\toprule
number & incident lines & point in \(\PP^3(K)\)\\
\midrule
\endhead
\bottomrule
\endfoot
1 & $(1,5,6)$ & $[1:0:1:0]$\\
2 & $(1,7,10)$ & $[1:-2:1:-2]$\\
3 & $(1,8,12)$ & $[1:1+\tau:1:1+\tau]$\\
4 & $(1,9,11)$ & $[1:1-\tau:1:1-\tau]$\\
5 & $(2,3,4)$ & $[1:0:-1:0]$\\
6 & $(2,19,23)$ & $[1:1+\tau:-1:-1-\tau]$\\
7 & $(2,20,22)$ & $[1:1-\tau:-1:-1+\tau]$\\
8 & $(2,21,24)$ & $[1:-2:-1:2]$\\
9 & $(3,7,14)$ & $[1:1-\tau:-1:-1-\tau]$\\
10 & $(3,8,13)$ & $[1:-2:-1:-1+\tau]$\\
11 & $(3,9,15)$ & $[1:1+\tau:-1:2]$\\
12 & $(4,10,17)$ & $[1:1+\tau:-1:-1+\tau]$\\
13 & $(4,11,16)$ & $[1:-2:-1:-1-\tau]$\\
14 & $(4,12,18)$ & $[1:1-\tau:-1:2]$\\
15 & $(5,13,19)$ & $[1:1-\tau:1:-2]$\\
16 & $(5,14,21)$ & $[1:1+\tau:1:1-\tau]$\\
17 & $(5,15,20)$ & $[1:-2:1:1+\tau]$\\
18 & $(6,16,22)$ & $[1:1+\tau:1:-2]$\\
19 & $(6,17,24)$ & $[1:1-\tau:1:1+\tau]$\\
20 & $(6,18,23)$ & $[1:-2:1:1-\tau]$\\
21 & $(7,16,19)$ & $[1:0:-\frac{\tau}{3}:-\frac{3+\tau}{3}]$\\
22 & $(7,18,20)$ & $[1:1+\tau:-\tau:0]$\\
23 & $(8,16,21)$ & $[1:1-\tau:-\tau:0]$\\
24 & $(8,17,20)$ & $[1:0:-\frac{\tau}{3}:\frac{2\tau}{3}]$\\
25 & $(9,17,19)$ & $[1:-2:-\tau:0]$\\
26 & $(9,18,21)$ & $[1:0:-\frac{\tau}{3}:\frac{3-\tau}{3}]$\\
27 & $(10,13,22)$ & $[1:0:\frac{\tau}{3}:\frac{-3+\tau}{3}]$\\
28 & $(10,15,23)$ & $[1:1-\tau:\tau:0]$\\
29 & $(11,13,24)$ & $[1:1+\tau:\tau:0]$\\
30 & $(11,14,23)$ & $[1:0:\frac{\tau}{3}:-\frac{2\tau}{3}]$\\
31 & $(12,14,22)$ & $[1:-2:\tau:0]$\\
32 & $(12,15,24)$ & $[1:0:\frac{\tau}{3}:\frac{3+\tau}{3}]$\\
\end{longtable}
\endgroup

\section*{Acknowledgments}

The authors gratefully acknowledge PSNC for access to the Eagle cluster, as well as the Faculty of Mathematics and Computer Science for access to the local computing cluster. We also acknowledge the assistance of GPT-5.6 in identifying the relevance of fractional linear programming, a key insight that made the discovery of the counterexample possible. The authors remain solely responsible for the verification, interpretation, and presentation of the results.

\section*{Funding}
 Piotr Pokora is supported by the National Science Centre (Poland) Sonata Bis Grant  \textbf{2023/50/E/ST1/00025}. For the purpose of Open Access, the author has applied a CC-BY public copyright license to any Author Accepted Manuscript (AAM) version arising from this submission.

\end{document}